\documentclass[12pt]{amsart}
\usepackage{amscd}
\usepackage[all]{xy}
\usepackage[notcite,notref]{showkeys}

\newcommand{\rank}{{\operatorname{rank}}}
\newcommand{\supp}{{\operatorname{supp}}}

\newcommand{\CH}{\operatorname{CH}}

\newcommand{\Aut}{\operatorname{Aut}}

\newcommand{\Span}{\operatorname{Span}}

\newcommand{\C}{\mathbf{C}}

\renewcommand{\P}{\mathbf{P}}

\newcommand{\Z}{\mathbf{Z}}

 \newcommand{\sC}{\mathcal{C}}
\newcommand{\sG}{\mathcal{G}}
\newcommand{\sM}{\mathcal{M}}

\newcommand{\sO}{\mathcal{O}}

\newcommand{\sT}{\mathcal{T}}

\newcommand{\sS}{\mathcal{S}}
 
 \newcommand{\sF}{\mathcal{F}}

\newcommand{\sA}{\mathcal{A}}

 \numberwithin{equation}{section}

\theoremstyle{plain}
\newtheorem{thm}[equation]{Theorem}
\newtheorem{prop}[equation]{Proposition}

\theoremstyle{definition}

\newtheorem{ex}[equation]{Example}

\setcounter{tocdepth}{1}

\begin{document}
 \title{Rational cubic fourfolds with a symplectic group of automorphisms }
 \author[C. Pedrini]{Claudio Pedrini}
\address{Dipartimento di Matematica \\ %
Universit\'a degli Studi di Genova \\ %
Via Dodecaneso 35 \\ %
16146 Genova \\ %
Italy}
\email{claudiopedrini4@gmail.com}

\begin{abstract}A well known conjecture asserts that a cubic fourfold $X$ is rational if  it has a cohomologically associated K3 surface. G.Ouchi in [Ou] proved that if $X$ admits a  finite group $G$
of symplectic automorphisms, whose order is different from 2, then $X$ has an associated K3 surface $S$ in the derived sense, i.e. $\sA_X \simeq D^b(S) $, where  $\sA_X$ is the Kuznetsov component  of the derived category $D^b(X)$.This is equivalent to have a cohomologically associated K3 surface  and therefore  $X$ is conjecturally rational. In  this note we prove that  cubic fourfolds with
a cyclic group $G$ of symplectic automorphisms , whose order is not a power of 2,  are rational and belong to the Hassett divisor $\sC_d$, with $d = 14 ,42$. We also describe rational cubic fourfolds $ X$
with a symplectic group of automorphisms $G$, such that $(G, S_G(X)$, is a {\it Lech pair} with $\rank S_G(X) =19,20$.

\end{abstract}
\maketitle 

\section{Introduction}
Let $\sC$ be the moduli space of smooth cubic fourfolds. A well known conjecture asserts that a cubic fourfold  $X$ is rational if  it has  a cohomologically associated  K3 surface $S$, meaning that the 
middle cohomology of $X$ -as a Hodge structure- contains the primitive cohomology of $S$. This is an explicit countable union of divisors $\sC_d$ in $\sC$, where $d$ satisfies the following numerical condition\par
(*) $d>6$ and  $d$ is not divisible by 4,9 or a prime $p \equiv 2 (3)$\par
In [BLMNPS, Cor.29.7]  it has been proved that a smooth cubic fourfold has a cohomologically associated K3 surface $S$ if and only if there is an equivalence of categories  $\sA_X \simeq D^b(S)$. Here
$\sA_X$ is the Kuznetsov  component of the derived category $D^b(X),$ i.e
$$D^b(X) = <\sA_X, \sO_X, \sO_X(1),\sO_X(2)>$$
Cubics in $\sC_{14} , \sC_{26}, \sC_{38},\sC_{42}$ are known to be rational, see [RS]. In all these cases the  construction of a birational map $\mu : X \dashrightarrow W$, where $W$ is either $\P^4$ or a notable smooth rational  fourfold, can be  achieved by considering a surface $S_d \subset X$ admitting a four dimensional family of $(3e-1)$-secant curves of degree $e\ge 2$. These curves are parametrized by a rational variety with the property that through a general point of $\P^5$ there passes a unique curve of the family,  The inverse $\mu^{-1}: W \dashrightarrow X$ is given by a linear system of divisors in $\vert \sO_W(e\cdot i(W) -1)\vert$, where $i(W)$ is the index of $W$, having   points of multiplicity $e$ along  an irreducible surface $U \subset W$
 which is a birational incarnation of a K3 surface $S$ associated to $X$.\par
 The divisors $\sC_8$ and $\sC_{18}$ contain countably many rational cubic fourfolds. Recently B.Hassett , in [Hass 2], showed that
there is a countable union of codimension two loci in $\sC_{24} $ containing rational cubic fourfolds. The intersection $\sC_8 \cap \sC_{12} $ contains 3 irreducible components one of which consists
of rational cubic fourfolds, see [BBH]. \par
A different approach in order to describe familes of rational cubic fourfolds  $X$ is to consider  the case when $X$ admits a finite group of   automorphisms.\par
The following result has been proved by G.Ouchi in [Ou,Thm.8.2] 
\begin{thm}\label{Ou}  Let $G$ be a group of symplectic automorphisms of a smooth cubic fourfold $X$. If the order of  $G$   is different from 2  there exists a unique K3 surface $S$ such that $\sA_X \simeq D^b(S) $\end{thm}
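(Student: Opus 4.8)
The plan is to convert the statement into a question about the Mukai Hodge structure of $\sA_X$ and then settle it by lattice theory. Recall that $\widetilde H(\sA_X,\Z)$ is an even unimodular lattice of signature $(4,20)$, isometric to $U^{\oplus 4}\oplus E_8(-1)^{\oplus 2}$, carrying a weight-two Hodge structure of K3 type whose $(2,0)$-line $\widetilde H^{2,0}(\sA_X)$ corresponds to $H^{3,1}(X)$. Write $\widetilde H^{1,1}(\sA_X,\Z)$ for the algebraic part and $\widetilde T$ for the transcendental lattice, i.e. the orthogonal complement of $\widetilde H^{1,1}(\sA_X,\Z)$. By [BLMNPS, Cor.29.7] together with the Addington--Thomas criterion, an equivalence $\sA_X\simeq D^b(S)$ with $S$ a K3 surface exists if and only if the hyperbolic plane $U$ embeds primitively into $\widetilde H^{1,1}(\sA_X,\Z)$, equivalently if and only if $X\in\sC_d$ with $d$ satisfying $(*)$. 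So the whole task is to produce such an embedding from the hypothesis that $X$ carries a symplectic group $G$ with order different from $2$.

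First I would record the constraints the $G$-action imposes. Since every automorphism of $X$ is induced by a projective linear automorphism of $\P^5$, it fixes the hyperplane class, so the distinguished positive definite rank-two sublattice $A_2\subseteq\widetilde H^{1,1}(\sA_X,\Z)$ present for every cubic fourfold is pointwise $G$-invariant. Because $G$ is symplectic it acts trivially on $\widetilde H^{2,0}(\sA_X)$, hence trivially on $\widetilde T$. Therefore $\widetilde T\oplus A_2$ lies in the invariant lattice $\widetilde H(\sA_X,\Z)^G$, and the coinvariant lattice $\Lambda_G:=(\widetilde H(\sA_X,\Z)^G)^\perp$ is a negative definite sublattice of $\widetilde H^{1,1}(\sA_X,\Z)$ orthogonal to $A_2$; up to finite index $\widetilde H^{1,1}(\sA_X,\Z)\otimes\Q=(A_2\oplus\Lambda_G)\otimes\Q$.

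Next I would pin down $\Lambda_G$. The finite symplectic actions on a Mukai lattice of K3 type are governed by the Nikulin--Mukai classification (the coinvariant lattice of a symplectic automorphism of $\sA_X$ coincides with that of the corresponding symplectic action on a K3 Mukai lattice), which determines the rank and discriminant form of the negative definite lattice $\Lambda_G$; in particular $\rank\Lambda_G\ge 12$ as soon as $|G|\ge 3$. Feeding this into Nikulin's theory of primitive embeddings, I would show that $\widetilde H^{1,1}(\sA_X,\Z)$, an even lattice of signature $(2,\ast)$ whose rank exceeds the length $\ell$ of its discriminant group by the required margin, splits off a copy of $U$ as an orthogonal direct summand. \emph{This discriminant-theoretic step is where I expect the real difficulty to lie}, and it is precisely here that the hypothesis $|G|\neq 2$ enters: a symplectic involution has coinvariant lattice $E_8(-2)$, a $2$-elementary lattice whose discriminant form tends to force the associated integer $d$ to be divisible by $4$, violating $(*)$ and obstructing the embedding of $U$, whereas for $|G|\neq 2$ the parity and length conditions of Nikulin's criterion are met and $U$ embeds.

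Granting the embedding, [BLMNPS, Cor.29.7] produces a K3 surface $S$ with $\sA_X\simeq D^b(S)$, which gives existence. For uniqueness I would use the rank bound of the previous paragraph: when $|G|\ge 3$ one has $\rank\Lambda_G\ge 12$, and since $\rho(S)=\rank\widetilde H^{1,1}(\sA_X,\Z)-2\ge\rank\Lambda_G$, this forces $\rho(S)\ge 12$. A K3 surface of Picard rank at least $12$ has no nontrivial Fourier--Mukai partners, so the surface $S$ realizing $\sA_X\simeq D^b(S)$ is unique up to isomorphism. As the order-two case is excluded by hypothesis, this covers every $G$ under consideration and completes the argument.
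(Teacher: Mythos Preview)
The paper does not give its own proof of this theorem: it is stated in the introduction as a quoted result of Ouchi, with the attribution ``The following result has been proved by G.~Ouchi in [Ou, Thm.~8.2]'', and no argument is supplied. The only place the paper touches on the mechanism is the opening paragraph of Section~4, where it records that $\rho(\sA_X)\ge\rank S_G(X)\ge 12$ once $G\not\simeq\{1\},\Z/2\Z$ and then invokes Morrison [Mor, Cor.~2.10] to conclude that the associated K3 surface is unique. So there is nothing in the paper to compare your existence argument against; for uniqueness, your final paragraph and the paper's Section~4 remark are the same idea (large Picard rank $\Rightarrow$ no Fourier--Mukai partners via Morrison).

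Your outline is in fact a faithful sketch of Ouchi's own argument (Mukai lattice of $\sA_X$, coinvariant lattice contained in the algebraic part, then a lattice-theoretic criterion to force a copy of $U$, then Addington--Thomas/[BLMNPS] for existence, Morrison for uniqueness). Two points to tighten. First, the line ``the coinvariant lattice of a symplectic automorphism of $\sA_X$ coincides with that of the corresponding symplectic action on a K3 Mukai lattice'' and the appeal to the ``Nikulin--Mukai classification'' is not quite right as stated: the symplectic groups arising on cubic fourfolds (e.g.\ $L_2(11)$, $A_7$, $M_{10}$) are strictly larger than those acting symplectically on K3 surfaces, so the relevant input is the Leech-lattice/Conway-group framework (H\"ohn--Mason, Huybrechts) or the explicit classification in [LZ], not Nikulin--Mukai. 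Second, you correctly flag the discriminant-theoretic step as the crux, but as written it is only asserted; in Ouchi's paper this is where the actual work happens, and your proposal would need to supply the Nikulin-type computation (or cite the precise statement) rather than leave it as ``I would show''.
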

A cubic fourfold as in \ref{Ou} is conjecturally rational and therefore should belong to a divisor $\sC_d$ with $d$ as in (*).  Cubic fourfolds with a symplectic involution have no associated K3 surfaces and  are conjecturally irrational. On the other hand  there are cubics with a non-symplectic  automorphism of order 3 that are known to be rational see [Ped].\par
The group of regular automorphisms of a cubic fourfold  $X\subset \P^5$ is  finite and any automorphism $f $  of $X$ is induced from a linear automorphism of the ambient projective space. Therefore the induced automorphism  $f^*$ on $ H^4(X,\Z)$ preserves $h^2 $, where $h$ is the class of a hyperplane section. The Abel-Jacobi map 
 $$p_*q^* : H^4(X,\Z) \simeq H^2(F(X),\Z),$$
 \noindent where $P_F \subset F(X) \times X$ is the universal family and $p,q$ are the projections to $F(X)$ and $X$, respectively, induces an isomorphism of Hodge structures 
 $$H^{3,1}(X) \simeq H^{2,0}(F(X)).$$
Let  $G$ be  a  finite group isomorphic to a subgroup of $\Aut(X)$, with $X$ a smooth cubic fourfold. Then 
$$\vert G \vert=2^{a_2} 3^{a_3} 5^{a_5} 7^{a_7}11^{a_{11}},$$
\noindent with $a_2 \le 5,a_3\le 7,a_5 \le 1, a_{11} \le 1$, see [YYZ,(6.1)].\par
An automorphism $f$ of a cubic fourfold $X$  $f$ is symplectic if $f^*$ acts as the identity on $H^{2,0}(F(X)$, that is equivalent to $f^*$ acting  as the identity  on $H^{3,1}(X) \simeq \C\omega$. \par 
If  $G = <\sigma> $ is a cyclic group acting symplectically  on a smooth cubic fourfold $X$ the order $n$  of $G$ is one of the following 
$$\{2,3,4,5,6,7,8,9,11,12,15\},$$
\noindent  see [LZ,Thm.4.17].\par
In [BGM] it is proved that, for $n =3,5,7,11$ the fourfold $X$ is rational. In Sect.2   we prove  that   for $n = 6,9,12,15$   the fourfold $X$ is rational and belongs either to $\sC_{14}$ or to $\sC_{42}$.In the case
$n=4,8$ there is a subfamily of rational cubics belonging to $\sC_8 \cap \sC_{12}$.\par
 In Sect.3 we describe  rational  cubic fourfolds $X$ such that $(X,S_G(X))$ is a {\it Lech pair}, with $ \rank S_G(X)=19, 20$,. Here $S_G(X)$ is the coinvariant lattice, i.e. the orthogonal complement of the 
invariant sublattice of $H^4(X,\Z)$ under the induced action of $G$.\par
In Sect.4 we consider cubic fourfolds $X$ with a symplectic automorphisms  of prime order and show when the automorphism induced on $F(X)$ is natural, i.e. comes from a symplectic automorphism of the associated K3 surface, see \ref{natural}.  In \ref{equivariant}  we consider the {\it equivariant  Kuznetsov component} $\sA^G_X$  and we prove that, if $X \in \sC_{6d}$ ( for  some suitable  $d$) has a order 3 group $G$ of symplectic automorphisms , then there exists a cubic fourfold  $X' \in \sC_{2d} $ such that $\sA^G_X \simeq \sA_{X'}$.

 \section { Cyclic groups of  symplectic automorphisms} 
In this section we prove that every cubic   fourfold admitting a cyclic group of symplectic automorphisms, whose order $n$ is not a power of 2, is rational and belongs  either to $\sC_{14}$ or to $\sC_{42}$. We also show that, in the case $n=4,8$ ,the family of cubic fourfolds with a cyclyc group of order $n$ contains a subfamily of rational cubics belonging to $\sC_8 \cap \C_{12}.$\par
Symplectic automorphisms of order 3  acting on a smooth cubic fourfold $X \subset \P^5_{\C}$ has been classified   in [Fu] and [GAL].\par
Let $\sigma_1,\sigma_2,\sigma_3 $ be the following automorphisms of $\P^5_{\C}$ of order 3
$$ \sigma_1 : [x_0,x_1,x_2,x_3,x_4,x_5]  \to :[x_0,x_1,x_2 ,x_3,\zeta x_4, \zeta^2 x_5] ;$$
$$ \sigma_2:  [x_0,x_1,x_2,x _3,x_4,x_5]  \to :[x_0,x_1,x_2 , \zeta x_3,\zeta x_4, \zeta x_5]  ;$$
$$\sigma_3: [x_0,x_1,x_2,x_3,x_4,x_5] \to :[x_0,x_1,\zeta x_2,\zeta x_3,\zeta^2x_4, \zeta^2 x_5] .$$
\noindent with $\zeta^3=1$.  Let $V_i$, for $i=1,2,3$, be the families  of cubic fourfolds, invariant  under $\sigma_i$ on which  the automorphism acts symplectically , see [GAL,Thm.3.8] . \par
Let $V_1$ be the family of cubic fourfolds invariant under the automorphism $\sigma_1$.  Every $X \in V_1$ is defined by an equation of the form 
\begin{equation} \label{V_1}  f(x_0,x_1,x_2,x_3) +x^3_4 +x^3_5 + x_4x_5 l(x_0,x_1,x_2,x_3) =0 \end{equation}
\noindent with $f$ of degree 3 and $l$ of degree 1. \par
The family $V_1$ has dimension 8. The rank of the algebraic lattice
$$A(X) = H^4(X,\Z) \cap H^{2,2}(X)$$ 
\noindent is 13 . The cubic fourfold $X$  contains two disjoint planes, see [BGM, Prop.5.2]. Therefore  is rational and belongs to $\sC_{14}$. The fixed locus of $\sigma_1$ on $F(X)$ consists of  the 27 lines on the cubic surface 
$$ S :  f(x_0,x_1,x_2,x_3) =0 .$$
Let $V_2$ be the family of cubic fourfolds invariant under the automorphism $\sigma_2$. The family $V_2$ has dimension  2.  A  cubic fourfold $X \in V_2$ has an equation of the form
\begin{equation} \label{sum} F(x_0,x_1,x_2,x_3,x_4,x_5) = f(x_0,x_1,x_2,)+g(x_3,x_4,x_5)=0,\end{equation}
\noindent where $f$ and $g$ are homogeneous of degree 3.  
\begin{prop}\label{V_2} Every cubic fourfold in $V_2$ has an associated K3 surface $S$ , is rational and belongs to $\sC_{14}$. The fixed locus of $\sigma_2$ on $F(X)$ is 
isomorphic to the product $C \times D$, with $C,D$ elliptic curves. A minimal resolution of the quotient $F(X) /\sigma^*_2$  is  a hyper-K\"alher manifold birational to the generalized Kummer manifold $K^2(C \times D)$,\end{prop}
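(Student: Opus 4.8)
The plan is to read off every assertion from the explicit equation \eqref{sum}. First I would record the two plane cubics cut out on the coordinate planes $\Pi_1=\{x_3=x_4=x_5=0\}$ and $\Pi_2=\{x_0=x_1=x_2=0\}$, namely $C=\{f=0\}\subset\Pi_1$ and $D=\{g=0\}\subset\Pi_2$. A Jacobian-criterion computation (the partials of $F$ separate into those of $f$ and those of $g$) shows that $X$ is smooth if and only if $C$ and $D$ are both smooth, so for $X\in V_2$ they are elliptic curves. For $p\in C$ and $q\in D$ the join line $\overline{pq}=\{[sp+tq]\}$ satisfies $F(sp+tq)=s^3f(p)+t^3g(q)=0$, whence every such line lies on $X$. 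This produces a closed immersion of $C\times D$ into $F(X)$ whose image is an algebraic surface isomorphic to the product $C\times D$.

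Next I would settle membership in $\sC_{14}$, rationality, and the existence of an associated K3. Since $\langle\sigma_2\rangle$ is a cyclic group of symplectic automorphisms of order $3\neq 2$, Theorem \ref{Ou} already furnishes a K3 surface $S$ with $\sA_X\simeq D^b(S)$, equivalently a cohomologically associated K3 by [BLMNPS,Cor.29.7], so $X$ lies in $\sC_d$ for some admissible $d$. To pin down $d=14$ I would compute the algebraic lattice $A(X)$ for the two-dimensional family $V_2$, following [GAL,Thm.3.8]: since $V_2$ has codimension $18$ in moduli, $A(X)$ has rank $19$ for the general member, and one exhibits inside it a rank-two sublattice containing $h^2$ of discriminant $14$. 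Rationality then follows from [RS]; one also sees that $S$ must be the Shioda--Inose surface of $C\times D$, since its transcendental lattice is forced to be Hodge-isometric to $T(X)$.

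For the fixed locus I would use that $\sigma_2$ is diagonal with eigenspaces $U_1=\langle e_0,e_1,e_2\rangle$ (eigenvalue $1$) and $U_\zeta=\langle e_3,e_4,e_5\rangle$ (eigenvalue $\zeta$). A line $\ell\subset X$ is $\sigma_2$-invariant exactly when the corresponding $2$-plane $W\subset\C^6$ is $\sigma_2$-invariant, hence splits as $(W\cap U_1)\oplus(W\cap U_\zeta)$. The cases $W\subset U_1$ and $W\subset U_\zeta$ would force a line inside the smooth cubic curve $C$ or $D$ and are impossible, while the remaining case $W=\langle u_1,u_\zeta\rangle$ with $u_1\in U_1$, $u_\zeta\in U_\zeta$ nonzero forces $[u_1]\in C$ and $[u_\zeta]\in D$ by the same identity $F(su_1+tu_\zeta)=s^3f(u_1)+t^3g(u_\zeta)$. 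Thus the fixed locus of $\sigma_2^*$ on $F(X)$ is exactly the surface $C\times D$ found above.

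The main obstacle is the last statement. Here $\sigma_2^*$ is a symplectic automorphism of order $3$ of the hyper-K\"ahler fourfold $F(X)$ of $K3^{[2]}$-type, and its fixed surface $C\times D$ carries transverse $A_2$-singularities, so $F(X)/\sigma_2^*$ admits a symplectic (crepant) resolution $Y$, again hyper-K\"ahler. To identify the deformation type I would compute Euler characteristics: since $\chi(C\times D)=0$ one gets $\chi\bigl(F(X)/\sigma_2^*\bigr)=\tfrac13\chi(F(X))=108$, and the $A_2$-resolution over $C\times D$ leaves $\chi$ unchanged, so $\chi(Y)=108=\chi\bigl(K^2(C\times D)\bigr)$; together with $b_2(Y)=7$ this identifies $Y$ as a hyper-K\"ahler fourfold of Kummer type. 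Finally I would match periods: the transcendental part of $Y$ is $T(F(X))^{\sigma_2^*}\cong T(C\times D)$, Hodge-isometric to that of $K^2(C\times D)$, so the Torelli theorem for hyper-K\"ahler manifolds yields that $Y$ is birational to $K^2(C\times D)$. The delicate points I expect to fight are the existence and holomorphic-symplectic nature of the resolution and the precise comparison of Beauville--Bogomolov lattices needed before Torelli applies; these are where I would either invoke the general theory of symplectic quotients of $K3^{[2]}$-type fourfolds or carry out the lattice bookkeeping by hand.
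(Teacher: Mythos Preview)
Your treatment of the fixed locus of $\sigma_2^*$ on $F(X)$ is correct and is essentially a detailed unpacking of what the paper obtains by citing [Fu, Thm.~1]; nothing to add there.

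For rationality, membership in $\sC_{14}$, and the associated K3, however, you take a genuinely different route from the paper, and your route has a gap. The paper's argument is purely geometric: a cubic of the form $f(x_0,x_1,x_2)+g(x_3,x_4,x_5)=0$ contains two \emph{disjoint} planes $P_1,P_2$ (this is [CT, Rk.~2.4]; concretely one builds them from inflectional tangents of $C$ and $D$). The classical construction then gives a birational map $P_1\times P_2\cong\P^2\times\P^2\dashrightarrow X$, proving rationality directly, and its indeterminacy locus is a degree-$14$ K3 surface $S$ which is the associated K3; in particular $X\in\sC_{14}$ and $\sA_X\simeq D^b(S)$ all fall out at once. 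You instead invoke Theorem~\ref{Ou} for the abstract existence of $S$ and then assert that ``one exhibits inside $A(X)$ a rank-two sublattice containing $h^2$ of discriminant $14$'' without actually exhibiting any algebraic class other than $h^2$. That step is the whole content of the claim $X\in\sC_{14}$, and [GAL, Thm.~3.8] does not supply it. So as written your argument does not pin down $d=14$; the missing ingredient is precisely the pair of disjoint planes, which simultaneously gives the explicit class needed and makes the appeals to [RS] and to Ouchi unnecessary. (Your side remark that $S$ is the Shioda--Inose partner of $C\times D$ is plausible but is neither in the paper nor established by what you wrote.)

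For the last assertion the paper simply cites [Kaw, 3.2], which already proves that the minimal resolution of $F(X)/\sigma_2^*$ is hyper-K\"ahler and birational to $K^2(C\times D)$. Your Euler-characteristic/Torelli sketch is a reasonable outline of what such a proof looks like, and your numerics $\chi(Y)=108$ are correct, but you rightly flag the delicate points: establishing that the crepant resolution is irreducible holomorphic symplectic of Kummer type, and then matching Beauville--Bogomolov lattices with the correct quadratic form (not just the transcendental Hodge structures) before invoking Torelli. These are exactly the steps carried out in [Kaw]; you are in effect reproving that reference rather than offering an alternative, and the lattice comparison you would need is more involved than ``$T(F(X))^{\sigma_2^*}\cong T(C\times D)$''.
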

\begin{proof}  The fixed locus of the automorphism $\sigma_2$ on $X$ is the union of the cubic curve $C : f(x_0,x_1,x_2) =0$ contained in $\Pi_1: (x_3=x_4=x_5 =0)$  and the cubic  curve 
$D : f(x_3,x_4,x_5) =0$ contained in $\Pi_2 :(x_0=x_1=x_2=0)$. The fixed locus of $\sigma^*_2$ on $F(X)$ is the locus of lines joining a point $Q_1$ on the  curve $C $  and a  point $Q_2 $ on the   curve $D $, see [Fu,Thm 1]. These lines are parametrized by  $C \times D$ which is an abelian surface.\par
The quotient  $F(X)/\sigma^*_2$ has an $A_2$-singularity along the abelian variety  $C \times D$ and has a minimal resolution $Y$ which is birational to the generalized Kummer  manifold $K^2(C \times D)$,
see [Kaw, 3.2].\par
The fourfold $X$ contains 2 disjoint planes $P_1, P_2$, see [CT, Rk.2.4]. Therefore there is a rational dominant map
$$P_1 \times P_2 \simeq \P^2 \times \P^2 \dashrightarrow X$$
\noindent whose indeterminacy locus consists of the lines joining a point $p_1 \in P_1$ and $p_2 \in P_2$  which are contained in $X$. This locus is parametrized by a K3 surface $S$, which is associated to $X$. Therefore $\sA_X \simeq D^b(S)$.\par
 \end{proof} 
 The family  $V_3$ of cubic fourfolds $X$ that are invariant under the automorphism  
$$\sigma_3 :[x_0,x_1,x_2,x_3,x_4,x_5] \to :[x_0,x_1,\zeta x_2,\zeta x_3,\zeta^2x_4, \zeta^2 x_5],$$
\noindent acting symplectically on $F(X)$  consists of cubics defined by an equation of the form
\begin{equation}\label{V_3}  f(x_1,x_2) +g(x_2,x_3) +h(x_4, x_5) + \sum_{i,j,k}a_{ijk} x_i ix_j x_k =0,\end{equation}
\noindent with $f,g,h$ homogeneous of degree3 and $i=0,1 ; j =2,3.;  k=4,5$, see [Fu,Thm.1.1].\par
The family  $V_3$ has dimension 8. The algebraic lattice $A(X)$ has rank 13 .The fixed locus of $\sigma_3$ on $X$ consists of 9 isolated points and the fixed locus of $\sigma^*_3$ on $F(X)$ of 27 lines such that each fixed point on $X$ is the intersection of 3 lines in $F(X)$, see [Fu, IV-(5)].  There is a primitive sublattive of $A(X)_{prim}$ with Gram matrix
$$  
\begin{pmatrix} 
4&1&0 \\
1 &4&0\\
0&0&4 \\
\end{pmatrix}
$$
\noindent  Therefore the  algebraic lattice $A(X)$ contains a lattice $K =<h^2,v>$ with $v \in A(X)_{prim}$ and $v^2 =14$, see [BGM,Cor.3.5] .The cubic fourfold  $X $  belongs to $ \sC_{42} $ and is rational.

\subsection{Automorphisms of order 5,7,11}
Let  $F_5$  be the family of cubic fourfolds  $X \subset \P^5$ which are  invariant under the symplectic automorphism $\phi_5$ of order 5 of $\P^5$
\begin{equation}    \phi_5  : [x_0,x_1,x_2,x_3,x_4,x_5]  \to :[x_0,x_1, \zeta x_2 , \zeta^2 x_3, \zeta^3 x_4 ,  \zeta^4 x_5],\end{equation}
\noindent where $\zeta^5 =1$.The family $F_5$ has dimension 4.  Every $X \in F_5$ has an equation of the form
\begin{equation}  \label{equation} F(x_0,x_1,x_2,x_3,x_4,x_5)=f(x_0,x_1) +x_2x_5 l_1(x_0,x_1) +x_3 x_4 l_2(x_0,x_1) +  x^2_2x_4+x_2x^2_3 +x_3x^2_5 +x^2_4x_5,\end{equation}
with $f$ of degree 3 and $l_1,l_2$  of degree 1, see  [GAL,Thm.3.8].  A general cubic fourfold $X \in F_5$ has a symplectic group of isomorphisms $G =D_{10}$  generated by the automorphism $\phi_5$ and 
by the symplectic involution
$$\tau :  [x_0,x_1,x_2,x_3,x_4,x_5]   \to [x_0,x_1,x_5,x_4,x_3,x_2],$$
\noindent  see [LZ,Thm.1.2 (5)].\par
The fixed locus of  $\phi^*_5$ on on $F(X) $ consists of 14 isolated points, see [Fu, Thm. 1.1]. The algebraic  lattice $A(X)$  has order 17. 
 A cubic fourfold  $X \in F_5$  is rational and belongs to $\sC_{42}$, see [BGM. Cor.3.5].\par
Let  $F_7$  be the family of cubic fourfolds $X$ invariant under the symplectic automorphism $\phi_7$ of order 7 of $\P^5$
$$\phi_7 :  [x_0,x_1,x_2,x_3,x_4,x_5]  \to :[ \zeta x_0,\zeta^5 x_1, \zeta ^4 x_2 , \zeta^6 x_3, \zeta^2 x_4 ,  \zeta^3 x_5],$$
\noindent  where $\zeta^7 =1$. The family $F_7$  has dimension 2. Every $X \in F_7$ has an equation of the form
$$F(x_0,x_1,x_2,x_3,x_4,x_5) = x^2_0 x_4+x^2_1 x_2+x_0 x^2_2+x^2_3 x_5+x_3 x^2_4+x_1 x^2_5+ax_0 x_1 x_3+b x_2x_4 x_5=0$$
The fixed locus of $\phi_7$ on $F(X)$ consists of 9 isolated points. The algebraic lattice$A(X)$ splits as $<h^2> \oplus A(X)_{prim} $  with $A(X)_{prin} $ of rank 18.\par
A general cubic $X \in F_7$ has a symplectic group $G$ of order 21 generated by the automorphism $\phi_7 $ of order7
and by the order  3 symplectic automorphism 
$$\tau :  [x_0,x_1,x_2,x_3,x_4,x_5]  \to [x_2,x_0,x_1,x_4,x_5,x_3]. $$
The family $F_7$ is contained in $V_3$, hence every $X \in \sF_7$ is rational and belongs to $\sC_{42}$.\par
Let  $F_{11}$ be the family of cubic fourfolds invariant under the symplectic automorphism
 $$\phi_{11} :  [x_0,x_1,x_2,x_3,x_4,x_5] \to[x_0, \zeta x_1, \zeta^3 x_2, \zeta^4 x_3, \zeta^5 x_4, \zeta^9 x_5]$$
\noindent where $\zeta^{11} =1$. The family $F_{11}$ has dimension 0 and consists of the Klein cubic fourfold $X_K$, the only cubic fourfold with a symplectic automorphism of order 11. $X_K$ is the triple cover of $\P^4$ ramified on the Klein cubic threefold, defined by the equation
\begin{equation} \label{Klein}  x^2_0x_1+x^2_1x_2+x^2_2 x_3+x^2_3x_4 +x^2_4x_0+x^3_5 =0.\end{equation} 
The cubic $X_K$ has a group $G$  of symplectic  automorphisms isomorphic to $L_2(11)$.The group  $G$ is generated by   $\phi_{11}$ and  by  the symplectic automorphism of order 5
 $$ \tau :   [x_0,x_1,x_2,x_3,x_4,x_5] \to [x_0,x_3,x_2,x_4,x_5, x_2].$$ 
 Therefore $X_K \in F_5 $, is rational and  belongs to $\sC_{42} $. The fixed locus of  $\phi_{11}$ on on $F(X) $ consists of 5 isolated points, see [Fu,Thm.1.1] .\par
 
 \subsection{Cyclic groups of order order 6 and 9}
There are two families $\sF_1 , \sF_2$, each one of dimension 4 , of cubic fourfolds $X$ with a symplectic automorphism $ \sigma$ of order 6, see [LZ, Lemma 4.22]. For an appropriate choice of coordinates  the defining equation for $X \in \sF_1 $ belongs to

$$ \Span \{ x^2_0x_2,x^2_0x_3,x_0 x _1 x_2,x_0 x_1x_3,x^2_1x_2, x^2_1x_3,x^3_2,x^2_2x_3,x_2 x^2_3,x_2x_4x_5,x^3_3,x_3x_4x_5, x^3_4,x^3_5\}$$

\noindent and the automorphism $\sigma$ acts as follows
$$\sigma :   [x_0,x_1,x_2,x_3, x_4,x_5] \to [\zeta^3 x_0,\zeta^3 x_1, x_2,x_3, \zeta^2 x_4, \zeta ^2x_5],$$
\noindent with $\zeta$  a  primitive $6^{th}$- root of 1. The  order 3 automorphism $\sigma^2$ acts as follows
$$\sigma^2 :   [x_0,x_1,x_2,x_3, x_4,x_5]  \to   [x_0,x_1,x_2,x_3,  \omega x_4, \omega^2 x_5],$$ 
\noindent with $\omega$ a primitive third root of 1, see [LZ, Lemma 4.22].  Therefore  $X \in V_1$  and it is defined by an equation of the form as in \ref{V_1}
 The cubic fourfold $X$ contains two disjoint planes, is rational and belongs to $\sC_{14}$.\par
For an appropriate choice of coordinates  the defining equation for $X \in \sF_2 $ belongs to
$$ \Span \{ x^3_0, x_0 x^2_1,x_0 x_2 x_4, x_0 x_2 x_5, x_1 x_3 x_4, x_1 x_3 x_5 ,x^3_2, x_2x^2_3, x^3_4, x^2_4x_5, x_4 x^2_5,x^3_5\}$$
\noindent and the  order 3 automorphism $\sigma^2$ acts as follows
\begin{equation} \label{square}\sigma^2 :   [x_0,x_1,x_2,x_3, x_4,x_5]  \to   [x_0,x_1,\omega x_2,\omega x_3,  \omega^2 x_4, \omega^2 x_5], \end{equation}
\noindent with $\omega$ a primitive third root of 1.Therefore $X \in V_3$ and its equation is of the form  as in \ref{V_3} .
The fourfold  $X$  is rational and belongs to $\sC_{42}$.\par
  If  $G$ is  a  cyclic group $G$ of order 9 , acting symplectically on a cubic fourfold $X$ we can choose  coordinates $[x_0,x_1,x_2,x_3, x_4,x_5]  $ on $\P^5$  and a generator $g \in G$ such that the automorphism induced  by $g$ on $X$ is  one of the following
$$ g_1 : [x_0,x_1,x_2,x_3, x_4,x_5]  \to [x_0,\zeta^6 x_1, \zeta ^3 x_2, \zeta x_3, \zeta 4 x_4,\zeta^7 x_5] $$
$$ g_2 :  [x_0,x_1,x_2,x_3, x_4,x_5] \to [x_0,\zeta^3 x_1, \zeta^6 x_2,\zeta x_3, \zeta x_4,\zeta^4 x_5],$$
\noindent where $\zeta $ is a primitive  $9^{th}$-root of 1. In the first case  $X$ has an equation $F(x_0,x_1,x_2,x_3, x_4,x_5)=0$ such that

$$ F \in \Span\{ x^2_0 x_1,x^2_1 x_2,x^2_2 x_0,x^2_3 x_4,x^2_4 x_5,x^2_5 x_3\}$$
\noindent  and in the second case

$$ F \in \Span\{ x^2_0 x_1, x^2_1 x_2, x^2_2 x_0, x^2_3 x_4, x_3 x^2_4, x^3_3, x^3_4, x^3_5\}$$
\noindent see [LZ, Thm. 4.15]. \par
In both cases the equation $F(x_0,x_1,x_2,x_3, x_4,x_5)=0 $ defining $X$ is of  the form as in \ref{V_2} 
$$F(x_0,x_1,x_2,x_3, x_4,x_5)= f(x_0,x_1,x_2) +g(x_3,x_4,x_5)=0$$
\noindent with $f,g$ of degree 3,. Therefore  $X \in V_2$, is rational and belongs to $\sC_{14}$
\subsection{Cyclic groups of orders 12 and 15} In [LZ, Thm.1.8] a list is given of all the groups $G$ which are the symplectic automorphism 
group of a cubic fourfold $X$ with rank $S_G(X) =20$. Here $S_G(X)$ is the coinvariant lattice, i.e. the orthogonal complement of the 
invariant sublattice of $H^4(X,\Z)$ under the induced action of $G$. The associated   cubic fourfolds ( automatically isolated in moduli) includes the following ones, with an automorphism of order  12,15 .\par
(1) the cubic fourfold  $X_{12}$ defined by the equation :
\begin{equation} \label{12} F(x_0,x_1,x_2,x_3,x_4,x_5) = x^3_0+x^3 _1+x^3_2+x^3_3+x^3_4+x^3_5 -3(\sqrt{3} +1) (x_0 x_1x_2+ x_3x_4x_5)) =0.\end{equation}
This is the only cubic fourfold with a symplectic automorphism of order 12. It is rational because it belongs to the family $V_2$ in \ref{V_2}  of cubics whose equations are of the form $f(x_0,x_1,x_2,x_3) + g(x_3,x_4,x_5)=0$ with $f$ and $g$  homogeneous of degree 3.Therefore $X$ belongs to $\sC_{14}$.\par
(4) The cubic fourfold  $X_{15}\subset \P^7_{\C} $ defined by the equation 
\begin{equation} \label{15}  x^3_0+x^3 _1+x^3_2+x^3_3+x^3_4+ x^3_5+x^3_6 +x^3_7 = x_0+x_1+ x_2=x_3+x_4+x_5+x_6 +x_7 =0.\end{equation}
This is the only smooth cubic fourfold with a symplectic automorphism  $\sigma$ of order 15. Moreover
$$\Aut(X_{15} )/  \Aut^s(X_{15})\simeq \Z/6\Z,$$
\noindent see [LZ,Thm.1.8(6)].Let 
$$H_1 : x_0+x_1+ x_2=0  \   ,  \  H_2 : x_3+x_4+x_5+x_6 +x_7 =0 ,$$
\noindent be the hyperplanes in $\P^7$. Then $X_{15}$ is defined in $H_1 \cap H_2  =\P^5 \subset \P^7$, with coordinates $[x_1,x_2,x_3,x_4,x_5,x_6]$, by an equation of the form
$$F(x_1,x_2,x_3,x_4,x_5,x_6)= (-x_1-x_2)^3 +x^3_1+x^3_2+   f (x_3,x_4,x_5,x_6) =0,$$
\noindent with $f$ homogeneous of degree 3. The cubic fourfold $X_{15} \subset \P^5$ is invariant under the non-symplectic automorphism 
$$\tau :[x_1,x_2,x_3,x_4,x_5,x_6] \to  [\zeta x_1, \zeta x_2, x_3,x_4,x_5,x_6]$$
\noindent with  $\zeta^3 =1$.  Therefore $X_{15}$ contains two disjoint planes and belongs to $\sC_{14}$, see [Ped , Prop.3.6].\par
\subsection{Cyclic groups of order 4 and 8} Finally we consider the case of the families $\sG_4$ and $\sG_8$ of cubic fourfolds admitting a cyclic group of symplectic automorphisms of order 4 and 8, respectively.
We show that in both cases $\sG_4$ and $\sG_8$ contain subfamilies of cubics which are rational and belong to $\sC_8 \cap \sC_{12}$.\par
A cubic fourfold $X$ with a  cyclic group $ G$ of symplectic automorphisms of order 4 has an equation of the form 
\begin{equation} F(x_0,x_1,x_2,x_3,x_4,x_5) = x_0 N_1(x_2,x_3) +x_1 N_2(x_2,x_3) +F(x_0,x_1) + $$
$$+ x_4x _5 L_1(x_0,x_1) +x^2_4 L_2(x_2,x_3) +x^2_5 L_3(x_2,x_3)=0,\end{equation}
 \noindent with $N_1$,$N_2$ of degree 2, $F$ of degree 3 and $L_1,L_2,L_3$ of degree 1, see [LZ, Thm.4.15]. The group $G$ is generated by 
 $$\sigma : [x_0,x_1,x_2,x_3,x_4,x_5] \to [x_0,x_1,-x_2, - x_3, i x_4, -i x_5].$$
The family  $\sG_4$ of  these cubic fourfolds  has dimension 6.\par
The fixed point set of $\sigma$, viewed as an automorphism of $\P^5$, consists of the disjoint union of the lines  $l_1 =\overline{ P_0 P_1}$, the  line $l_2 =  \overline {P_2 P_3}$ and the points $P_4$,$P_5$. The lines $l_2$ and $l_3= \overline{ P_4 P_5}$ are contained in $X$. The fixed point set of the induced automorphisms $\sigma^*$ on $F(X)$ consists of 16 isolated points, see
[Fu, V-2].\par 
Let $L_2(x_2,x_3) = \alpha x_2+\beta x_3$ , $L_3(x_2,x_3) =\gamma x_2 +\delta x_3$ in the equation defining $X$. A line $l$ joining a point $P= (a_2,a_3) $ on $l_2$  and a point $Q= (b_4,b_5) $ on $l_3$ is contained in $X$ if
the coordinates of the points $P$ and $Q$ satisfy the equation
$$ a_2(\alpha b^2_4 +\gamma b^2_5)+a_3(\beta b^2_4 +\delta b^2_5)=0. $$
This equation defines of rational curve on the surface $l_2 \times l_3 \simeq \P^1 \times \P^1 $ and hence it gives a rational curve $R$ of degree 3 on $F(X)$. To the curve $R$ corresponds a cubic scroll 
$S \subset X$. Therefore $X \in \sC_{12}$. \par
Let $X$ be a cubic fourfold in $\sG_4$ defined by an equation  as in (2.11), with  $L_2(x_2,x_3) = \alpha x_2+\beta x_3$  and $N_1=N_2= (\alpha x_2+\beta x_3)^2$. Then $X$ contains 3 planes $\Pi_i$ of equations
$$\Pi_i  : a_i x_0 + b_i x_1 =\alpha x_2+\beta x_3 =x_5 =0$$
where  $F(a_i, b_i)=0$, for $ i=1,2,3$. Therefore $X \in \sC_8  \cap \sC_{12}$. The intersection $S \cap \Pi_i$ consists of all points on the line $r$ joining 
$P = (0,0,\beta,-\alpha,0,0) \in l_2$ and $P_4 =(0,0,0,0,1,0) \in l_3$. Since $r$ is a line of the ruling of $S$ we have $S \cdot \Pi_i =0$ and therefore $X$ is rational, see [BBH,3.1.3].\par
A cubic fourfold $X\in \sG_8$ is invariant under a symplectic automorphism $\sigma$ of order 8
$$\sigma : [x_0,x_1,x_2,x_3,x_4,x_5] \to [x_0,-x_1,\zeta^2 x_2, \zeta^6 x_3, \zeta x_4,\zeta^3i x_5],$$
\noindent with $\zeta^8=1$.The family $\sG_8$ has dimension 2, see [LZ, Thm.4.15]. The automorphism $\sigma^2$ generates a cyclic group of order 4, hence $X\in \sG_8 \subset \sG_4$.
As such the cubic fourfold $X$ has an equation  as in 2.11  with
\begin{equation} \label{order 8} N_1 = \alpha x_2 x_3 \ , \  N_2=\beta x^2_2 + \gamma x^2_3 \ , \  F(x_0,x_1) = a x^3_0+b x_0 x^2_1$$
$$ L_1 =c x_1 \ , \ L_2 = d x_3  \  ,  L_3 =e x_2,\end{equation}
\noindent see [LZ, Thm. 4.15 (3)].The fixed point set of $\sigma^*$, as an automorphism of $F(X) $, consists of  the 6 lines 
$$ \overline{P_1 P_4} \ , \   \overline{P_1 P_5}  \ , \   \overline{P_2 P_3} \ , \  \overline{P_2 P_4} \ , \  \overline{P_3 P_5}\ , \  \overline{P_4 P_5},$$
\noindent see[Fu, V-3]. Let  $l_1 = \overline{P_2 P_4}$ and $l_2 =\overline{P_3 P_5}$ .  A  line joining a point $P=(a_2,a_4) \in l_1$ and $Q= (b_3,b_5) \in L_2$ is contained in $X$ if
$$b_3  a^2_4 -a_2 b^2_5= 0.$$
This equation defines of rational curve on the surface $l_1 \times l_2 \simeq \P^1 \times \P^1 $ and hence it gives a rational curve $R$ of degree 3 on $F(X)$. To the curve $R$ corresponds a cubic scroll 
$S \subset X$. Therefore $X \in \sC_{12}$. \par
If  in 2.12  $\beta = e =0$ then $X$ contains the 3 planes $\Pi_i$  of equation
$$s_i x_0 +t_i x_1= x_3 = x_4=0,$$
\noindent where  $F(s_i, t_i)=0$, i =1,2,3,  with $F(x_0,x_1) = a x^3_0+b x_0 x^2_1 $. The intersection $S \cap \Pi_i$ consists of all points on the line $r$ joining 
$P = (0,0,1,0,0,0) \in l_1$ and $P_4 =(0,0,0,0,0,1) \in l_2$.  Since $r$ is a line of the ruling of $S$ we have $S \cdot \Pi_i =0$ and therefore $X$ is rational and belongs to 
$\sC_8 \cap \sC_{12}$, see [BBH,3.1.3].\par

\section{Lech pairs associated to rational cubic fourfolds}
 Let $X$ be a cubic fourfold with symplectic automorphism group $G$. Let $S= S_G(X)$ be the coinvariant lattice, i.e.  the orthogonal complement of the invariant  sublattice of $L= H^4(X, \Z)$ under the induced action of  $G$.Then $(G,S)$ is a {\it Lech pair}  (see [LZ ,4.1] ) and 
$$S \subset A(X) _{prim}= H^{2,2}(X) \cap H^4(X,\Z)_{prim}$$
\noindent where  equality holds generically. Except for the case $G =\Z/2$, the rank of $A(X)$ is $\ge  12$, see [LZ,Thm. 1.2]. The transcendental lattice $T(X)$ has a primitive embedding into the K3 
lattice $(E_8)^2 \oplus U^3$. Therefore $X$ has a associated K3 surface and therefore is conjecturally rational. R.Laza and Z.Zheng in [LZ,Thm.1.2]  classify 34 pairs $(G,S)$ of Lech pairs arising from cubic fourfolds .
Let $\sM_{(G,S)}$ be the moduli space parametrizing Lech pairs $(G,S)$, where $G$ is a group of symplectic automorphisms of a a cubic fourfold $X$ and $S =S_G(X)$. Then $\sM_{(G,S)}$ has dimension $20 - \rank(S)$, where $\rank S \le 20$, see [LZ,3.2]. The top dimensional moduli spaces $\sM_{(G,S)}$ will correspond to small groups $G$.The maximal cases, i.e. $\rank S=19,20$ correspond to cubic fourfolds $X$ such that the lattice $A(X)= <S,h^2>$ of algebraic cycles, with $h$ the class of a hyperplane section, has rank 20 or 21. In the first case $\sM_{(G,S)}$ has dimension 1 while in the second case
 the cubic fourfolds are  isolated in moduli, because $\dim \sM_{(G,S)}=0$. All these cubics have  associated K3 surfaces with Picard rank equal to 19 or 20.  A K3 surface $S$ whose Picard rank is either 19 or 20
 admits a Shioda-Inose structure, i.e. there is a Nikulin involution $i$ on $S$ such that the desingularization $Y$ of the quotient surface $S/<i>$ is a Kummer surface, associated to an abelian surface$A$. Therefore there  $S$ is birational  to $Y$.\par
A way to construct cubic fourfolds  $X$ such that $\rank S_G(X) =19$ is to consider a degree six K3 surface $S$, admitting $A_6$ as a symplectic group of automorphisms and defined as follows.
$$ S =V(x^2_1+\cdots + x^2_5+(x_1+\cdots+x_5)^2, x^3_1+\cdots+x^3_5 -(x_1+\cdots+x_5)^3).$$
Then the pencil $X_t$ of cubic fourfolds with a symplectic action of $A_6$ is defined as a hyperplane section of a cubic in $\P^6$
$$X_t  :  x_0 +\cdots+x_5= x^3_0 +\cdots +x^3_5+x_6(x^2_0+\cdots + x^2_5) +tx^3_6 =0,$$
see [LZ,Rk.5.7]. Each fourfold $X_t$ is invariant under the cyclic group $G$ of order 5 generated by a permutation of the coordinates $[x_0,x_1,x_2,x_3,x_4,x_5]$. Therefore  $X_t$ is rational and belongs to $\sC_{42}$.\par
There are six groups of  symplectic automorphisms  $G$  of  a cubic fourfold $X$ such that the coinvariant lattice $S_G(X)$ has rank 20.  The list contains the following cubic fourfolds, see [LZ, Thm.1.8] .  \par

(1) The Fermat cubic $X_F : x^3_0+x^3_1+x^3_2+x^3_3+x^3_4+x^3_5 =0$  which is rational and  is contained in the intersection of all Hassett divisors $\sC_d$, with $d$ as in (*) ;  \par

(2) The Klein cubic fourfold  $X_K$  which is rational and belongs to $\sC_{42}$, see \ref{Klein}.\par

(3)There are two smooth cubic fourfolds $X_1, X_2$ with a symplectic action of the group $G =A_7$.  The first one is the Clebsch cubic fourfold $X_1$  defined by the equation in $\P^6$

$$ x^3_0+x^3_1+x^3_2+x^3_3+x^3_4+x^3_5 +x^3_6 = (x_0+x_1+x_2+x_3+x_4+x_5 +x_6)^3$$

The fourfold $X_1$ contains 357 planes consisting of two $S_7$-orbits. One orbit consists of the 105 Fermat type planes
$$x_i+x_j=x_k+x_l= x_m +x_n =x_p =0,$$
\noindent where (i,j,k,l,m,n,p)) is a permutation of (0,1,2,3,4,5,6),see [DIO,(2.6)]. Since  $X_1$ contains two disjoint planes,  it belongs to $\sC_{14}$ and is rational.\par
The second one is the cubic $X_2$ defined by the equation 
$$x^3_1+x^3_2+x^3_3 +(12/5)x_1x_2x_3+x_1x^2_4+x_2x^2_5+x_3 x^2_6 +(4\sqrt{ 15}/9) x_4x_5x_6 =0,$$
\noindent see [YYZ,(6.14)]  and [Ko,Example 2.1]. We also have
$$G= \Aut (X_2) =\Aut^s(X_2) \simeq A_7.$$
The cubic fourfold $X_2$ is invariant under the order 6 automorphism  $\sigma$ of $\P^5$
$$ \sigma :  [x_1,x_2,x_3,x_4,x_5,x_6] \to  [x_1, \zeta x_2, \zeta^2 x_3, -x_4,  \zeta x_5, -\zeta^2 x_6].$$ 
\noindent with $\zeta$  a primitive cubic root of 1. Therefore  $X_2$ is invariant under the cyclic group $H = <\sigma> \subset G$ of order 6. Let
$$\sigma^2 :  [x_1,x_2,x_3,x_4,x_5,x_6] \to [x_1,\zeta^2 x_2,\zeta x_3, x_4,\zeta^2x_5, \zeta x_6]$$
Then, after a change of coordinates $ [x_1,x_2,x_3,x_4,x_5,x_6]  \to [y_0,y_1,y_2,y_3,y_4,y_5]$,  the automorphism $\sigma^2$ acts as the automorphism in \ref{square}. Therefore  $X_2$ belongs to the family $\sF_2$ of cubic fourfolds with a cyclic group of symplectic automorpohism of order 6. As such $X_2$ is rational and belongs to $\sC_{42}$.\par
(4) The unique  cubic fourfold $X_{12}$ with a symplectic automorphism of order 12, described in \ref{12}, which is rational and belongs to $\sC_{14}$.\par

(5) The unique  cubic fourfold $X_{15}$ with a symplectic automorphism of order 15, described in \ref{15}, which is rational and belongs to $\sC_{14}$.\par

(6) Cubic fourfolds  with a symplectic action  of the Mathieu group $M_{10}$. There are two cubics $X_{10}$ and $X'_{10}$ with symplectic group  isomorphic to $M_{10} $, see [YYZ,Thm.6.15].The group $G = M_{10}$ is maximal in the sense that it is not isomorphic to a proper subgroup of the group of symplectic automorphisms for any smooth cubic fourfold $X$, see [YYZ,Thm.6.12]. Moreover the full group of automorphisms for both $X_{10}$ and $X'_{10}$ coincides with $M_{10}$. The Mathieu group $M_{10} $ has a normal subgroup isomorphic to $A_6$ with index 2. Since $A_6$ contains 6 cyclic subgroups of order 5  the cubics $X_{10}, X'_{10}$ are invariant under a cyclic group of order 5. Therefore these fourfolds are defined , for an appropriate choice of coordinates, by an equation of the form  as in \ref{equation}, see [LZ, Thm.1.2 (5) (b)]. Then $X_{10}, X'_{10} \in F_5$, are rational and belong to $\C_{42}$.\par

 \section {Associated K3 surfaces and natural automorphisms } 
Let $X$ be a cubic fourfold amd $\sA_X$ its Kuznetsov component.The Picard number $\rho(\sA_X)$ equals $\rank \  H^{2,2}(X) -1$,  see [Ou,2.5]. If $S$ is a K3 surface associated to $X$ such that $D^b(S) \simeq \sA_X$ then $\rho(\sA_X)$ coincides with the Picard rank of $S$ and $T_S = T_{\sA_X}$. If $X$ has a symplectic group of automorphisms $G$  then $\rho(\sA_X) \ge \rank \ S_G(X)$. If $G$ is
not isomorphic to the trivial group or to $\Z/2\Z$ then $\rho = \rho(\sA_X) \ge 12$. In this case, thanks to a result of D.Morrison in [Mor,Cor 2.10],  there exists a unique K3 surface $S$  with Picard rank $\rho$, such that $D^b(S) \simeq \sA_X$.\par
Let $X \in \sC_d$, where $d =2(n^2 +n+1)$ with $n \ge 2$. Then there exists a  polarized K3 surface $(S,L)$ of degree $d$ and genus $g$, with with $L^2= d=2g-2$  and a surjective rational map
 \begin{equation} \label{rat} \phi : \sF_d \dashrightarrow \sC_d, \end{equation}
\noindent  where $\sF_d$ is the modiuli space of polarized K3 surfaces of degree $d$, such that  $\phi (S) =\vert X \vert $.There is an isomorphism
 \begin{equation} \label{iso} F(X) \simeq S^{[2]} , \end{equation}
\noindent  where  $S^{[2]}$  is the Hilbert  scheme of  points on $S$, see [Hass, Thm.29]. \par
If $X$ admits a  prime order symplectic automorphism $\tau $ a natural question to ask is when $\tau$  is induced by  the automorphism   $\sigma^{[2]}$ of $F(X)$ , via  the isomorphism in \ref{iso},with $\sigma$
ia prime order symplectic automorphism of the K3 surface $S$.In this case the  fixed locus of the automorphism $\tau$ on $X$ consists of a finite number of isolated points, see [Ped, Lemma 5.7].\par 
The moduli space of polarized K3 surfaces with  a  a symplectic automorphism of order $p =3,5,7$ has dimension $m_p$ with $m_3 =7, m_5=3, m_7=1$, see [GS,Cor.5.1]
The cubic fourfolds $X$ in the families $V_1$ and $V_2$  contain two disjoint planes and hence belong to $\sC_{14} $, while  cubics  $X$ in $V_3, F_5, F_7$ belong to $\sC_{42}$. In all  the cases $X$ belongs to the image of a map as in \ref{rat} and there is an isomorphism $F(X) \simeq S^{[2]}$, with $\phi(S) =X$.
\begin{prop} \label{natural} Let  $V_1, V_2, V_3, F_5, F_7$ be the families of  be cubic fourfolds  $X$  with a symplectic automorphism  $\tau$ of prime order $p =3,5,7$, respectively. The families $V_3$ and $F_7$ contain  a  codimension 1 subfamily  where the automorphism $\tau$ is natural., i.e. it is induced by the symplectic sutomorphism $\sigma^{[2]}$ of $F(X)$. \end{prop}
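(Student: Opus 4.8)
The plan is to read naturalness, via the isomorphism $F(X)\simeq S^{[2]}$ of \ref{iso}, as the statement that the induced automorphism $\tau^{*}$ of $F(X)$ equals $\sigma^{[2]}$ for a symplectic automorphism $\sigma$ of order $p$ of the associated K3 surface $S$, and then to count dimensions against the moduli space of such pairs $(S,\sigma)$. As a first numerical check, a symplectic automorphism of order $p=3,5,7$ of a K3 surface has exactly $k_{p}=6,4,3$ isolated fixed points, at each of which the differential has determinant $1$ and eigenvalues $\lambda,\lambda^{-1}$ with $\lambda\ne\lambda^{-1}$, hence two distinct eigendirections; therefore $\sigma^{[2]}$ has precisely $\binom{k_{p}}{2}+2k_{p}=27,14,9$ isolated fixed points on $S^{[2]}$, namely the $\binom{k_{p}}{2}$ unordered pairs of distinct fixed points and the $2k_{p}$ length-two subschemes supported at a single fixed point along an eigendirection of $d\sigma$. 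These agree with the fixed loci of $\sigma_{3}^{*}$ on $F(X)$ for $X\in V_{3}$ and of $\phi_{7}^{*}$ for $X\in F_{7}$ recorded above, so naturalness is numerically admissible in both families.

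First I would run the correspondence backwards. Since $V_{3},F_{7}\subset\sC_{42}$ and $42=2(4^{2}+4+1)$, each such $X$ carries a degree-$42$ associated K3 surface with $F(X)\simeq S^{[2]}$, and the map $\phi$ of \ref{rat}, being surjective between the two $19$-dimensional spaces $\sF_{42}$ and $\sC_{42}$, is generically finite. Conversely, given a degree-$42$ polarized K3 surface $S$ with a symplectic automorphism $g$ of order $p$, the automorphism $g^{[2]}$ of $S^{[2]}\simeq F(X)$, with $X=\phi(S)$, is symplectic, and by the Torelli theorem for cubic fourfolds it is induced by a symplectic order-$p$ automorphism $\tau$ of $X$, natural by construction. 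That $X$ lies in the expected family is forced by fixed loci: a natural $\tau$ has only isolated fixed points on $X$ by [Ped,Lemma 5.7], which excludes $V_{1}$ and $V_{2}$ — there $\sigma_{1}$ fixes the cubic surface $\{f=0\}$ and $\sigma_{2}$ the two cubic curves $C,D$ on $X$ — and leaves $V_{3}$ for $p=3$; for $p=7$ one lands in $F_{7}$.

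The codimension is then a dimension count. By [GS,Cor.5.1] the moduli space of degree-$42$ polarized K3 surfaces carrying a symplectic automorphism of order $p$ has dimension $m_{p}$, with $m_{3}=7$ and $m_{7}=1$. Because $\phi$ is generically finite, the image of this moduli space under $S\mapsto\phi(S)$ is a subfamily of $V_{3}$, respectively $F_{7}$, of dimension $m_{p}$, and by the previous paragraph it is exactly the natural locus: every natural $X$ has $\tau^{*}=g^{[2]}$, hence yields such a pair $(S,g)$ and arises in the image. Since $\dim V_{3}=8$ and $\dim F_{7}=2$, the natural loci have codimension $8-7=1$ and $2-1=1$. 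The inequality $m_{p}<\dim$ also forces the inclusion to be proper, so the generic member is \emph{not} natural: otherwise the associated K3 surfaces would sweep out a family of dimension $8$ (respectively $2$) inside an $m_{p}$-dimensional moduli space.

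The step I expect to be the main obstacle is the Torelli input of the second paragraph — that the symplectic automorphism $g^{[2]}$ of $F(X)\simeq S^{[2]}$ genuinely descends to an automorphism of the cubic $X$. On the Beauville--Bogomolov lattice this is the criterion that $\tau$ is natural if and only if $\tau^{*}$ fixes the half-diagonal class $\delta$ in $H^{2}(F(X),\Z)\simeq H^{2}(S,\Z)\oplus\Z\delta$, so that $\tau^{*}$ preserves $\delta^{\perp}=H^{2}(S,\Z)$ and restricts there to a symplectic automorphism of the K3 lattice. Granting this, the condition $\tau^{*}(\delta)=\delta$ is non-trivial, hence defines a proper subfamily, yet is realised by the reverse construction, hence defines a non-empty one of the predicted dimension $m_{p}$; the numerical coincidence of fixed points from the first paragraph is precisely the visible trace of this lattice identification, and its rigorous verification through the global Torelli theorem for cubic fourfolds is where the real work lies.
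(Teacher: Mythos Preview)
Your argument follows the same two-step strategy as the paper: rule out $V_1$ and $V_2$ (the paper also rules out $F_5$, which fixes a line on $X$) via the non-isolated fixed loci on $X$ using [Ped, Lemma 5.7], and then obtain the codimension-$1$ natural locus in $V_3$ and $F_7$ by comparing $\dim V_3=8$, $\dim F_7=2$ against $m_3=7$, $m_7=1$ from [GS, Cor.5.1]. You supply extra rigor the paper's sketch omits --- the numerical matching of fixed points on $S^{[2]}$ and the Torelli step ensuring $g^{[2]}$ really descends to an automorphism of $X$ --- but the underlying skeleton is identical.
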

\begin{proof} For $X \in V_1$ the symplectic automorphism  $\tau$ fixes a cubic surface, for $X \in V_2$ two cubic curves and for $X \in F_5$ a line, see [Fu].Therefore in these  two cases the automorphism 
$\tau^*$ on $F(X)$  cannot be induced by a symplectic automorphism of a K3 surface $S$, via the automorphism in \ref{iso}.\par
In the other cases, i:e.  $X \in V_3$ and $ X \in F_7$ ,the fixed locus of $\sigma$  on $X$ consists of a finite number of isolated points. The dimension of $V_3$ is 8 and the dimension of $F_7$ is 2  while the 
dimension of the moduli  space of  K3 surfaces  with a symplectic automorphism of order 3 is 7 and  that  of the moduli  space of  K3 surfaces  with a symplectic automorphism of order 7 is 1.  Therefore  the subfamilies corresponding  to such surfaces, via  the map in \ref{rat}, have codimension 1.
\end{proof}
\subsection{Equivariant Kuznetsov component} Let $X$ be a cubic fourfold with a group action by a finite group $G$. Then the line bundle $\sO_X(1)$ and the semiorthogonal decomposition
$$D^b(X) =<\sA_X,\sO_X,\sO_X(1),\sO_X(2)>$$
\noindent are preserved under the group action of $G$. Hence we obtain the semiorthogonal decomposition
$$D^b_G(X) =<\sA^G_X,<\sO_X>^G,<\sO_X(1)>^G,<\sO_X(2)>^G>$$
\noindent of the equivariant derived category of $X$. The equivariant Kuznetsov component $\sA^G_X$ is defined as the orthogonal complement of 
$$< \ <\sO_X>^G ,<\sO_X(1)>^G,<\sO_X(2)>^G \ >,$$
\noindent see [FFM].\par
It is natural to ask whether $\sA^G_X$ is equivalent to the derived category  of a smooth variety.\par
 If $X$  belongs to the faniily $V_2$ of cubics with a symplectic automorphism  $\sigma$ of order 3 then the quotient $X /G$, with $G =<\sigma>$, has  a  crepant resolution  $Y \to X/G$, see [XH, Thm. 5.8].Therefore $D^b(Y) \simeq D^b_G(X)$ and there is an isomorphism
$$\sA^G_X \simeq D^b(C \times D)$$
\noindent were $C,D$ are elliptic curves , as in \ref{V_2}.\par
If $X$ has a prime order automorphism $\tau$ which is induced by a symplectic automorphism $\sigma$ on a K3 surface $S$, as in \ref{natural}, then
$$\sA^G_X \simeq D^b_G(S) \simeq D^b(Y),$$
\noindent where $Y \to S/ \sigma$  is a minimal resolution, see [XH,3.1]\par
Here we describe families of cubic fourfolds $X$, with a symplectic automorphism of order 3 , such there exists another smooth cubic fourfold $X'$ and an isomorphism
$$\sA^G_X \simeq \sA_{X'}.$$
\begin{prop}  \label{equivariant} Let $S \in \sF_{6d}$  be a K3 surface with a polarization   $L$ of degree $L^2= 6d =2g -2 $, where $g =n^2 +n +2 $, Assume that $S$ has a  symplectic automorphism $\sigma$ of order 3. Let $X \in \sC_{6d} $ be the image of $S$ under the   map in \ref{rat}.  Assume that $( n^2+n+1)/3 +1= m^2 +m+2$, with $m\ge 2$. Then there is a cubic fourfold $X' \in \sC_{2d} $ such that
$$\sA^G_X\simeq \sA_{X'},$$
\noindent where $G =<\sigma>$. \end{prop}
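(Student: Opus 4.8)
The plan is to identify $\sA^G_X$ with the derived category of an explicit K3 surface of degree $2d$, and then to recognize that surface, through the rational map \ref{rat}, as one associated to a cubic fourfold in $\sC_{2d}$. First I would exploit the isomorphism $F(X)\simeq S^{[2]}$ of \ref{iso}: since $\sigma$ is a symplectic automorphism of $S$ it induces $\sigma^{[2]}$ on $S^{[2]}=F(X)$ and hence an automorphism $\tau$ of $X$, which is \emph{natural} in the sense of \ref{natural} precisely because it comes from an automorphism of $S$. For such a natural symplectic automorphism of prime order I may invoke the equivalences recalled before the statement, namely $\sA^G_X\simeq D^b_G(S)$ together with the crepant resolution equivalence $D^b_G(S)\simeq D^b(Y)$ of [XH,3.1], where $\rho\colon Y\to S/\sigma$ is the minimal resolution. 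As $\sigma$ is symplectic of order $3$ it has six fixed points, so $S/\sigma$ acquires six $A_2$-singularities whose crepant resolution $Y$ is again a K3 surface.

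Next I would compute the degree of the induced polarization on $Y$. The members of $\sF_{6d}$ carry a $\sigma$-invariant polarization $L$ with $L^2=6d$, lying in $\Pic(S)^{\sigma}$, and $L$ descends to a class $\bar L$ on $S/\sigma$ with $\pi^{*}\bar L=L$ for the degree $3$ quotient $\pi\colon S\to S/\sigma$. Pulling $\bar L$ back by the crepant resolution $\rho$ and using that a degree $3$ cover scales self-intersections by $3$, I obtain a polarization $M$ on $Y$ with
\[
M^{2}=\bar L^{2}=\tfrac{1}{3}L^{2}=2d .
\]
Hence $(Y,M)$ is a polarized K3 surface of degree $2d$ and $T_{\sA^G_X}=T_Y$.

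It then remains to apply the rational map \ref{rat} in degree $2d$. From $L^{2}=6d=2g-2$ with $g=n^{2}+n+2$ one has $3d=n^{2}+n+1$, and the assumption $(n^{2}+n+1)/3+1=m^{2}+m+2$ rewrites as $d=m^{2}+m+1$, so that $2d=2(m^{2}+m+1)$ with $m\ge 2$ has exactly the shape required in \ref{rat}. Feeding $(Y,M)$ into the associated surjective rational map $\phi\colon\sF_{2d}\dashrightarrow\sC_{2d}$ produces a cubic fourfold $X'\in\sC_{2d}$ with $F(X')\simeq Y^{[2]}$, and by [BLMNPS,Cor.29.7] an equivalence $\sA_{X'}\simeq D^b(Y)$. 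Composing with the first step yields $\sA^G_X\simeq D^b(Y)\simeq\sA_{X'}$.

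The step I expect to be the main obstacle is this last one: I must guarantee that the \emph{particular} surface $(Y,M)$ lies in the locus where $\phi$ is defined, equivalently that its period point lies in the period domain of cubic fourfolds. Because $S$ supports a symplectic automorphism one has $\rank\Pic(S)\ge 13$, so $Y$ inherits a large Picard rank and $(Y,M)$ is a very special point of $\sF_{2d}$ rather than a generic one. The delicate part is therefore to control the transcendental lattice $T_Y$ and its discriminant form across the degree $3$ quotient, and to check that $T_Y$ admits the primitive embedding into the middle cohomology lattice of a cubic fourfold dictated by the discriminant $2d$; granting this, the Torelli theorem for cubic fourfolds supplies $X'$ and completes the argument.
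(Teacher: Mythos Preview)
Your argument follows the same route as the paper: resolve $S/\sigma$ to a K3 surface $Y$, show it carries a polarization of degree $2d$ (the paper quotes [GM, Thm.~5.2] rather than doing your direct cover computation), feed $(Y,M)$ into the map \ref{rat} to obtain $X'\in\sC_{2d}$, and chain $\sA^G_X\simeq D^b_G(S)\simeq D^b(Y)\simeq\sA_{X'}$ via [XH]. The domain-of-definition concern you raise at the end is not addressed in the paper either; it simply applies $\phi$ to $Y$ without further comment.
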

\begin{proof} Let  $Y $ be the K3 surface which is the  minimal desingularization of the quotient $S/\sigma$. The surface $S/\sigma$ has 6 singularities of type $A_2 $ and $Y$ contains 12 irreducible curves, i.e. 6  disjoint pairs of rational curves meeting in a point. In the diagram
$$ \CD
\tilde S@>{	\alpha}>>  S    \\
@VVV                  @V{\pi}VV  \\
Y@>{\beta}>>S/\sigma    \endCD $$
\noindent the surface $\tilde S$ is the blow-up of $S$ at the six isolated fixed points $P_1,\cdots,P_6$ of $\sigma$. The automorphism $\sigma$ extends to an automorphism  of $\tilde S$ \and $Y = \tilde S/\sigma$.\par
The family $\sS$ of polarized K3 surfaces with a symplectic automorphism of order 3 is the union of countably many components of dimension 7. Similarly the family $\sT$ of  K3 surfaces that are quotients of K3 surfaces with a symplectic automorphism of order 3 is the union of countably many components of dimension 7. The correspondence between K3 surfaces in $\sS$ and in $\sT$ has been described in [GM].
If  $S$ is  a K3 surface  with a polarization  $L$ of degree $L^2= 6d =2g -2 $, where $g =n^2 +n +2 $, then the desingularization $Y$ of  $S/\sigma$ has a polarization $H$ of degree $2d$, see [GM,Thm.5.2].  
Therefore $H^2 = 2d = 2g' -2$, with $g'=( n^2+n+1)/3 +1$. Since  $g'=m^2 +m+2$  the   map $\phi$ in \ref{rat} gives  a diagram
\begin{equation} \label{diagram} \CD
\sF_{6d}@>{\phi}>>  \sC_{6d}    \\
@V{f_*}VV                  @.  \\
 \sF_{2d }@>{\phi}>>\sC_{2d}   \endCD \end{equation}
\noindent where $f_*$ is induced by the quotient map $f : S \to S/\sigma$.  The cubic fourfold $X =\phi(S)$ belongs to $\sC_{6d}$ and $F(X) \simeq S^{[2]}$. 
The order 3 automorphism $\sigma$ induces a symplectic automorphism $\sigma^{[2]}$  and  $\sA_X \simeq D^b(S)$. The image $X' =\phi(Y)$ belongs to $\sC_{2d}$, $ F(X') \simeq Y^{[2]}$  
 and  $\sA_{X'} \simeq D^b(Y)$.\par
Let $G= <\sigma>$ and let $D^b_G(S)$ be the derived category of $G$-equivariant coherent sheaves on $S$. Then
$ D^b(Y) \simeq D^b_G(S),$
\noindent see [XH,Thm.3.1]. Therefore we get
$$\sA^G_X \simeq \sA_{X'}$$
\end{proof}
\begin{ex} (1)  For  $ n=4$ we get  $d=7$ ,  $g= 22$ and $g' = 8$. Let $S \in \sF_{24}$ be k3 surface of genus 22 and degree 42, equipped  with a symplectic automorphisms of order 3. There are two rational cubic fourfolds $X$ and $X'$, with $X \in \sC_{42}$ and $X' \in \sC_{14}$, associated to $S $ and to  the minimal resolution $Y \in \sF_{14}$, respectively, such that
$Y\in \sF_{8}$, respectively, such that
$$\sA^G_X \simeq \sA_{X'}.$$
In this case the map  $ \phi : \sF_{42} \dashrightarrow \sC_{42}$ in \ref{rat} can be described explicitely. Given a polarized K3 surface $(S,L) \in \sF_{42}$  of genus 22, for each point $p \in S$ one considers the rational curve
$$\Delta_p = \{\xi \in S^{[2]} : \supp  \xi =p\}.$$
Under the isomorphism $S^{[2]} \simeq F(X)$ , where $X = \phi(S)$ , the curve $\Delta_p$ corresponds to a degree 9 scroll $R_p \subset X$ with 8 nodes. Conversely, given an element $(X,R)$, with $X  \in \sC_{42} $ and $R$ an eight-nodal curve of degree 9, there exists precisely one element $\phi^{-1}(X)=(S,L,p)$, see [FV, Thm.1.2].\par
(2) For  $n=16$ we get $d=546 $ , $g =274$  and $g'=92$. Therefore if $S$ is a  polarized K3 surface of genus 274  with a symplectic automorphism of order 3  the minimal resolution $Y$ of $S/\sigma$ is a polarized K3 surface of genus 92.  There are two  cubic fourfolds $X \in \sC_{546}$ and 
$X' \in \sC_{182}$ associated to the K3 surfaces  $S$ and  $Y$, respectively, such that
$$\sA^G_{ X} \simeq \sA_{X'}$$
\end{ex}


\begin{thebibliography}{10} 



\bibitem[BBH]{BBH} M.Bolognesi,Z.Brahimi and H.Awadi{\it Moduli of cubic fourfolds and reducible OADP surfaces} arXiv:2409.12032v1 [math.AG] 18 Sep 2024


\bibitem [BGM] {BGM} S.Billi,A.Grossi and L.Marquand,{\it Cubic fourfolds with a symplectic automorphism of prime order},arXiv:2501.03869v3  [math.AG] 23  Jan. 2025


\bibitem[BLMNPS]{BLMNPS} Bayer, Lahoz, Macri, Nuer and Stellari, {\it Stability conditions in families}, Publ. Math. IHES 133, 157-325 (2021)


 \bibitem [CT] {CT} J.L. Colliot-Th\'elene,{\it $\CH_0$-trivialite' universelle d' hypersurfaces cubiques presque diagonales}, Algebraic Geometry 4 (5) (2017) 


\bibitem[DIO]{DIO} A.Degtyarev,I.Itenberg and J.C. Ottem,{\it Planes in cubic fourfolds},arXiv:2105.13951v2 [math.AG] 30 Aug 2022  

 \bibitem [FFM] {FFM} L.Flapan,S.Frei, L.Marquand, {\it Equivariant  Kuznetsov components for cubic fourfolds with a symplectic involution}. arXiv:2502.1929v1 [mat.AG] 26 Fe.2025
 
\bibitem [FV]{FV} G.Farkas and A.Verra, {\it  The moduli space of K3 surfacea of genus 22}, Mathematische Annalen, https://doi.org/10.1007/s00208-020-02036-y (2020)

 \bibitem  [Fu]{Fu} L. Fu, {\it Classification of polarized symplectic autormorphisms of Fano varieties of cubic fourfolds}, Glasgow Mathematical Journal, Vol. 58 (2016) 17-37.


\bibitem [GM]{GM} A.Garbagnati and Y.P. Montaniz, {\it Order 3 symplectic automorphism on K3 surfaces},arXiv:2102.01207v2 [math.AG] 22 Sep 2022


\bibitem [GS]{GS} A.Garbagnati  and A.Sarti ,{\it Symplectic automorphisms of prime order on K3 surfaces}, J.of Algebra (2007)


 \bibitem [GAL]{GAL} V.Gonzales-Aguillera and A.Liendo,{\it Automorphisms of prime order of smooth cubic n-folds}, Arch.Math.(Basel) 97(2011) no.1 ,25-37.

\bibitem[Hass  ]{Hass  }B.Hassett ,{\it Cubic fourfolds, K3 surfaces, and rationality questions},Rationality Problems in Algebraic Geometry, 
 R. Pardini and G.P. Pirola, eds., 26-66, CIME Foundation Subseries, Lecture Notes in Mathematics 2172, Springer 2016 

\bibitem[Ko]{Ko} K.Koike,{\it Cubic fourfolds with symplectic automorphisms}, arXiv:2409.08448v1 [math.AG] 13 Sep 2024

\bibitem [Kaw] {Kaw}K.Kawatani, {\it On the birational geometry for irreducible symplectic 4-folds related to the Fano schemes of lines} arXiv:0906.0654v1 [math.AG]  Jun 2009.


\bibitem [FFM] {FFM} L.Flapan,S.Frei, L.Marquand, {\it Equivariant  Kuznetsov components for cubic fourfolds with a symplectic involution}. arXiv:2502.1929v1 [mat.AG] 26 Fe.2025
 
 \bibitem  [Fu]{Fu} L. Fu, {\it Classification of polarized symplectic autormorphisms of Fano varieties of cubic fourfolds}, Glasgow Mathematical Journal, Vol. 58 (2016) 17-37.


\bibitem [LZ]{LZ} R.Laza and Z.Zheng, {\it Automorphisms and periods of cubic fourfolds}, Math.Z. 300 (2) 1455-1507 (2022)

\bibitem [Mor] {Mor} D.Morrison,{\it On K3 surfaces with large Picard number}, Invent.Math. 75, 105-121 (1984)

\bibitem [Ou]{Ou} G.Ouchi, {\it Automorphism groups of cubic fourfolds and K3 categories}, Algebraic Geometry   Vol. 8 ( 2 ) 171 - 195   (2021)


\bibitem [Ped]{Ped} C.Pedrini, {\it K3 surfaces associated to a cubic fourfold}, Indagationes Mathematicae (2024), https://doi.org/10.1016/indag.2024.08003

\bibitem [RS]{RS }F.Russo and G.Stagliano',{\it Trisecant flops, their associated K3 surfaces and the rationality of some cubic fourfolds},journal of ther European Mathematical Society,25 (6):2435-2482 (2022)

\bibitem [XH]{XH} Xianyu Hu {\it Equivariant  Kuznetsov components of certain cubic fourfolds}. arXiv:2312.17392v1 [math.AG] 28 Dec 2023

 \bibitem[YY]{YY} S. Yang and X.Yu, {\it On Lattice polarizable cubic fourfolds}, arXiv:2103.09132v1 [math.AG] 16 mar 2021

\bibitem [YYZ] {YYZ} S.Yang, X.Yu and Z.Zhu, {\it Automorphism group of cubic fivefolds and fourfolds}  arXiv:2308.07186v2 [math.AG] 14 Aug 2024 


 \end{thebibliography}
\end{document}